\newtheorem{theorem}{Theorem}[section]
\newtheorem{corollary}[theorem]{Corollary}
\theoremstyle{definition}
\newtheorem{remark}[theorem]{Remark}
\newtheorem{definition}[theorem]{Definition}
\DeclareMathOperator{\Dens}{\overline{Dens}}
\DeclareMathOperator{\spa}{span}
\begin{document}

\title{Linear chaos for the Quick-Thinking-Driver model}

\date{}

\author[Conejero]{J. A. Conejero}
\address{Instituto Universitario de Matem\'{a}tica Pura y Aplicada,\newline\indent Universitat Polit\`{e}cnica de Val\`{e}ncia, \newline\indent 46022, Val\`{e}ncia, Spain.}
\email{aconejero@upv.es}

\author[Murillo]{M. Murillo Arcila}
\address{Instituto Universitario de Matem\'{a}tica Pura y Aplicada,\newline\indent Universitat Polit\`{e}cnica de Val\`{e}ncia, \newline\indent 46022, Val\`{e}ncia, Spain.}
\email{mamuar1@posgrado.upv.es}

\author[Seoane]{J.B. Seoane-Sep\'{u}lveda}
\address{Departamento de An\'{a}lisis Matem\'{a}tico,\newline\indent Facultad de Ciencias Matem\'{a}ticas, \newline\indent Plaza de Ciencias 3, \newline\indent Universidad Complutense de Madrid,\newline\indent Madrid, 28040, Spain.}
\email{jseoane@mat.ucm.es}

\keywords{Death model, Birth-and-death problem, Car-following, Quick-Thinking-Driver, Devaney chaos, Distributional chaos,  $C_0$-semigroups}
\subjclass[2010]{47A16, 47D03}

\thanks{MEC Project MTM2013-47093-P and by MTM2012-34341.}

\begin{abstract}
In recent years, the topic of car-following has experimented an increased importance in traffic engineering and safety research. This has become a very interesting topic because of the development of driverless cars \cite{google_driverless_cars}.

Driving models which describe the interaction between adjacent vehicles in the same lane have a big interest in simulation modeling, such as the Quick-Thinking-Driver model. A non-linear version of it can be given using the logistic map, and then chaos appears.
We show that an infinite-dimensional version of the linear model presents a chaotic behaviour using the same approach as for studying chaos of death models of cell growth.
\end{abstract}

\maketitle

\section{Introduction}
\label{intro}

Dynamical systems on infinite-dimensional linear spaces allow us to describe phenomena in which a chaotic behaviour is exhibited.
The dynamics of the solution $C_0$-semigroups of gene amplification--deamplification processes with cell proliferation
have been widely studied by Banasiak et al. \cite{banasiak_lachowicz2001chaos,banasiak_lachowicz2002topological,banasiak_lachowicz_moszynski2003topological,banasiak_moszynski2011dynamics} (see, also, \cite{aroza_peris2012chaotic,delaubenfels_emamirad_protopopescu2000linear,grosse-erdmann_peris-manguillot2011linear}).
Such a behaviour can also be found on certain size structured cell populations, c.f. \cite{el-mourchid_metafune_rhandi_voigt2008on,el-mourchid_rhandi_vogt_voigt2009a}.

These models are very similar to the ones used to describe traffic. Models of car-following describe the interaction between adjacent vehicles in the same lane and show how one vehicle follows one another. Since the  studies of Greenshields \cite{greenshields1934the,greenshields1935a} in the 1930's, several models started to appear sharing this point.
See for instance the work of Pipes \cite{pipes1953an}, Helly \cite{helly1953simulation}, and Herman, Montroll et al \cite{chandler_herman_montroll1958traffic,herman_montroll_elliott_potts_rothery1959traffic} in the 1950's.
These models were perefectioned and improved. For a historical evolution of these models we refer the interested reader to \cite{brackstone_mcdonald1999car-following}.

During the 2000's, great advances in robotics and artificial intelligence have permited to develop prototypes of autonomous cars. The advances can be followed looking at the DARPA Grand Challenge competition \cite{darpa_challenge_competition}. In fact, motor companies like Audi or BMW have presented their first models very recently \cite{bmw_audi}.

We would like to emphasize that these seminal models, which are based on simple linear equations, cannot describe all the highly complex situations that, actually, occur on a roadway. These theoretical models work better on long stretches of road with dense traffic. In this note we consider one of these models, the Quick-Thinking-Driver model, for an infinite number of cars on a track. The infinite system of ordinary differential equations that are required to model the behaviour of these vehicles can be represented as a linear operator on an infinite-dimensional separable Banach space. Then, using some results of linear dynamics of $C_0$-semigroups we can prove the existence of different chaotic behaviours for the solutions of these equations.

\section{Preliminaries}

In the basic formulation of any of the car-following models, there is a relation between the acceleration of a car and the difference between its velocity and the velocity of the car that goes in front of it. We assume that a driver adjusts her speed according to her relative speed respect to that car. This can be described by means of the following equation
\begin{equation}\label{generalmodel}
u_1'(t+t_1)=\lambda_1(u_{2}(t)-u_1(t)),
\end{equation}
in which $u_1$ stands for the velocity of the car $1$, $u_2$ for the velocity of the car in front of the car $1$, namely car $2$, $t_1$ denotes the reaction time of driver $1$, and the positive number $\lambda_1$ is a sensitivity coefficient that measures how strong the driver $1$ responds to the acceleration of the car in front of her. Usually $\lambda_1$ lies
between $0.3-0.4s^{-1}$ \cite{brackstone_mcdonald1999car-following}.
Under the assumption that all drivers react ``very quickly'', one can take $t_1=0$.
This is known as the \emph{Quick-Thinking-Driver} (QTD) model. 

If we consider this model for an infinite number of cars circulating on a road, then the QTD model is given by an infinite system of first-order ordinary differential equations. As far as we know, this has not been considered yet in the literature:

\begin{definition}[The Infinite Quick-Thinking-Driver model]
	Let $(u_i(t))_i$ denote the velocities of an infinite number of cars on a track. Suppose that for every $i\in\mathbb{N}$,  the car $i$ behaves following the (QTD) model respect to one in front of it, the car $i+1$. The following infinite system of ordinary differential equations represents this situation.
	\begin{equation}\label{quickmodel}
	u_i'(t)=\lambda_i(u_{i+1}(t)-u_i(t))\text{ for }i\in\mathbb{N}.
	\end{equation}
	
	We also assume that the velocities at $t=0$, $(u_i(0))_i$, are given.
	We call to this model the \emph{Infinite Quick-Thinking-Driver model}. We will refer to it as the (IQTD) model. 
\end{definition}

McCartney and Gibson studied chaos for the (QTD) model making the parameter $\lambda_i$  to be proportional to the velocity \eqref{quickmodel}, i.e. $\lambda_i$ is replaced by $\gamma u_i(t)$, with $\gamma$ being the new sensitivity coefficient, and making the velocity of the leading car to be constant. This converts  \eqref{generalmodel} into the logistic equation, see also \cite{li2005nonlinear,lo_cho2005chaos} and then chaotic phenomena can be found in its solutions. We will see that  even in the linear (IQTD) model given by \eqref{quickmodel} chaos still appears.

Let $X$ be a separable infinite-dimensional Banach space. 
We assume that the reader is familiar with the terminology of $C_0$-semigroups on Banach spaces, see for instance \cite{engel_nagel2000one-parameter}. 

Next, we recall some basic definitions on linear dynamics of $C_0$-semigroups.
A $C_0$-semigroup $\{T_t\}_{t\geq 0}$ on $X$ is said to be {\em hypercyclic} if there exists $x\in X$ such that the set $\{T_tx:t\geq 0\}$ is dense in $X$. An element $x\in X$ is  a \emph{periodic point} for the semigroup if there exists $t>0$ such that $T_tx=x$. A semigroup $\{T_t\}_{t\geq 0}$ is called \emph{Devaney chaotic} if it is hypercyclic and the set of periodic points is dense in $X$.
In this setting, Devaney chaos yields the sensitive dependence on the initial conditions, as it was seen by Banks et al  \cite{banks_brooks_cairns_davis_stacey1992on,grosse-erdmann_peris-manguillot2011linear}.

Another extended definition of chaos is \emph{distributional chaos} \cite{martinez-gimenez_oprocha_peris2009distributional}. A $C_0$-semigroup $\{T_t\}_{t\geq 0}$ on $X$ is said to be \emph{distributionally chaotic} if there exists an uncountable subset $S\subset X$ and $\delta>0$ such that, for each pair of distinct points $x,y\in S$ and for every $\epsilon>0$, we have $\Dens(\{s\geq 0;||T_sx-T_sy||>\delta\})=1$ and $\Dens(\{s\geq 0;||T_sx-T_sy||<\epsilon\})=1$, where the upper density of a set $A\subset\mathbb{R}$ is defined as $$\Dens(A)=\limsup_{t\rightarrow\infty}\frac{\mu(A\cap[0,t])}{t},$$
where $\mu$ denotes the Lebesgue measure on $\mathbb{R}$.
We will show that both Devaney and distributional chaos appear in the solutions of the (IQTD) model using a suitable underlying Banach space.

\section{Chaos for the Quick-Thinking-Driver model}
Let us consider $\ell_1(s)$, $0<s\le1$, the weighted space of summable sequences defined as
\begin{equation}
\ell_1(s)=\left\{(v_i)_{i\in\mathbb{N}}\in\mathbb{K}^\mathbb{N}:||(v_i)_{i\in\mathbb{N}}||_s=\sum_{i\in\mathbb{N}}|v_i|s^i<\infty\right\}
\end{equation} 

If $s<1$, then we can consider the velocities of all the cars of the (IQTD) model in $\ell_1(s)$. 
We point out that such a choice of weights gives more importance to velocities of cars with low index $i$.

In order to solve the system of equations in \eqref{quickmodel}, we pose the following abstract Cauchy problem on $\ell_1(s)$
\begin{equation}\label{quickinfinite}
\left\{\begin{array}{rl}
u'(t )&=  Au(t)  \\
u(0) & =(u_i(0))_{i\in\mathbb{N}}
\end{array}\right\},
\end{equation}
where the operator $A$ is defined as 
\begin{equation}\label{equation_A}
(Au(t))_i=\lambda_i(u_{i+1}(t)-u_i(t)), \quad i\in\mathbb{N}
\end{equation}
for $u=(u_i(t))_{i\in\mathbb{N}}\in \ell_1(s)$ and $t\ge 0$, being $(u_i(0))_{i\in\mathbb{N}}$ the vector of velocities of the cars at $t=0$.


If the $\sup_{i\in\mathbb{N}}|\lambda_i|<\infty$, then the solution to \eqref{quickinfinite} can be represented by a $C_0$-semigroup $\{T_t\}_{t\ge 0}$ on $\ell^1(s)$ whose infinitesimal generator is $A$ with domain the whole space $\ell^1(s)$ . Then the operators in this $C_0$-semigroup can be represented as $T_t=e^{tA}=\sum_{k=0}^\infty (tA)^n/n!$ for all $t\ge 0$, see for instance \cite{engel_nagel2000one-parameter}*{Ch. I, Prop. 3.5}.

When considering in $\ell_1(1)$, the death models used for modeling cell-growth are of the form
\begin{equation}\label{death_model}
u_i'(t)=\beta_iu_{i+1}(t)-\alpha_i u_i(t) \quad\text{ with }\; \alpha_i,\beta_i>0\; \text{ for }i\in\mathbb{N}
\end{equation}

In proposition 7.36 of  \cite{grosse-erdmann_peris-manguillot2011linear}, they proved the following result:
\begin{theorem}
	If $\sup_i\alpha_i<\liminf_{i\rightarrow \infty}\beta_i$, then the solution $C_0$-semigroup of \eqref{death_model} is Devaney chaotic (and topologically mixing) on $\ell_1(1)$.
\end{theorem}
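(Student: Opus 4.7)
The plan is to invoke the eigenvalue criterion of Desch--Schappacher--Webb for chaos of $C_0$-semigroups, in the form used throughout \cite{grosse-erdmann_peris-manguillot2011linear}: if $A$ admits a family of eigenvectors $f(\mu)\in\ker(A-\mu I)$ depending analytically on $\mu$ in an open connected set $U\subset\mathbb{C}$ with $U\cap i\mathbb{R}\neq\emptyset$, and if $\spa\{f(\mu):\mu\in U\}$ is dense in the underlying Banach space, then the semigroup $\{e^{tA}\}_{t\ge 0}$ is simultaneously Devaney chaotic and topologically mixing. The proof therefore reduces to producing such an analytic family and verifying density.

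First, I would solve $Af=\mu f$ coordinate-wise. The equation $\beta_i f_{i+1}-\alpha_i f_i=\mu f_i$ gives the one-term recursion $f_{i+1}=(\mu+\alpha_i)f_i/\beta_i$, and normalising $f_1(\mu)=1$ yields the closed form
\[
f_i(\mu)=\prod_{j=1}^{i-1}\frac{\mu+\alpha_j}{\beta_j}.
\]
Set $M:=\sup_i\alpha_i$ and $L:=\liminf_i\beta_i$, so that $M<L$ by hypothesis. For any $\mu$ with $|\mu|<L-M$, the ratio $|\mu+\alpha_i|/\beta_i\le(|\mu|+M)/\beta_i$ is eventually bounded above by some fixed $r<1$, so the ratio test places $f(\mu)\in\ell_1(1)$ on the open disk $U:=\{\mu\in\mathbb{C}:|\mu|<L-M\}$. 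This disk contains the origin and a nontrivial arc of $i\mathbb{R}$, and analyticity of $\mu\mapsto f(\mu)$ from $U$ into $\ell_1(1)$ is routine since each coordinate is a polynomial in $\mu$ and the norm series converges uniformly on compact subsets of $U$.

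The main obstacle is density of $\spa\{f(\mu):\mu\in U\}$ in $\ell_1(1)$, which I would establish by a Hahn--Banach argument. Any annihilating functional $\phi=(\phi_i)\in\ell_\infty=\ell_1(1)^{\ast}$ produces the scalar function $F(\mu):=\sum_i\phi_i f_i(\mu)$, which is analytic on $U$ and identically zero there. Since each $f_i(\mu)$ is a polynomial in $\mu$ of degree exactly $i-1$ with leading coefficient $1/(\beta_1\cdots\beta_{i-1})\neq 0$, expanding $F$ as a power series in $\mu$ and collecting coefficients yields, for every $k\ge 0$,
\[
\sum_{i>k}\frac{\phi_i}{\beta_1\cdots\beta_{i-1}}\,e_{i-1-k}(\alpha_1,\dots,\alpha_{i-1})=0,
\]
where $e_m$ denotes the elementary symmetric polynomial of degree $m$. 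The gap $M<L$ is precisely what is needed to close the argument: $\phi\in\ell_\infty$ together with $\liminf\beta_i=L>0$ forces the rescaled tail $\phi_i/(\beta_1\cdots\beta_{i-1})$ to decay geometrically faster than the polynomial-in-$i$ times $M^{i-1-k}$ growth of the elementary symmetric polynomials, and a careful inductive estimate then isolates each $\phi_i$ and drives it to zero. With density in hand, the Desch--Schappacher--Webb criterion simultaneously delivers Devaney chaos and topological mixing of the solution semigroup on $\ell_1(1)$.
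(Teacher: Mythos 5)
Your overall strategy---computing the eigenvector field $f(\mu)$ with $f_i(\mu)=\prod_{j=1}^{i-1}(\mu+\alpha_j)/\beta_j$, checking it lands in $\ell_1(1)$ and depends analytically on $\mu$ on an open set meeting $i\mathbb{R}$, and feeding this into the Desch--Schappacher--Webb eigenvalue criterion---is exactly the route of \cite{grosse-erdmann_peris-manguillot2011linear}*{Prop. 7.36} and of this paper's proof of its own Theorem for the (IQTD) model, and your first two steps are correct. The gap is in the density step, which is the only nontrivial part. You propose to expand $F(\mu)=\sum_i\phi_i f_i(\mu)$ in powers of $\mu$ and ``isolate each $\phi_i$'' by an inductive estimate on the relations $\sum_{i>k}\phi_i\,e_{i-1-k}(\alpha_1,\dots,\alpha_{i-1})/(\beta_1\cdots\beta_{i-1})=0$. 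That estimate does not close: in the $k$-th relation the coefficient of $\phi_{k+1}$ is $1/(\beta_1\cdots\beta_k)$, while the natural bound on the tail is $\|\phi\|_\infty\sum_{m\ge1}\binom{k+m}{m}M^m/(\beta_1\cdots\beta_{k+m})$; the ratio of this tail bound to the leading coefficient is comparable to $(1-M/L)^{-(k+1)}-1$, which diverges as $k\to\infty$. So the leading term never dominates, no coordinate of $\phi$ is isolated, and condition (3) of the criterion is not established. As written, the ``careful inductive estimate'' is not available.

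The standard way to finish (and the way this paper argues for the (IQTD) model) is to evaluate the annihilating functional at the zeros of the products: $F(-\alpha_1)=\phi_1$, then divide by $(\mu+\alpha_1)$ and evaluate at $-\alpha_2$, and so on. The obstruction is that the points $-\alpha_j$ need not lie in your disk $\{|\mu|<L-M\}$ (take $M=1$, $L=3/2$: the disk has radius $1/2$ while $\alpha_1$ may equal $1$). The repair is to choose $a$ with $M<a<L$ and work on the convex open set $U=\{\mu\in\mathbb{C}:\sup_j|\mu+\alpha_j|<a\}$ instead: it contains the ball of radius $a-M$ about $0$ (hence meets $i\mathbb{R}$), it contains every $-\alpha_k$ because $|\alpha_j-\alpha_k|\le M<a$, and on it $|f_i(\mu)|\le\prod_{j=1}^{i-1}a/\beta_j$ is eventually geometric with ratio less than $1$, giving $f(\mu)\in\ell_1(1)$ and local uniform convergence of $\sum_i|f_i(\mu)|$, hence weak holomorphy of $f$ and of $F$. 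On this $U$ the evaluation-and-division argument runs to completion and yields $\phi=0$, after which the criterion delivers Devaney chaos and topological mixing as you state. (Alternatively, keep your disk but analytically continue $F$ to this larger convex region and then evaluate.)
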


More details can be found in \cite{aroza_peris2012chaotic,banasiak_lachowicz2001chaos}.
This proof is based on an application of the Desch-Schappacher-Webb (DSW) criterion \cite{desch_schappacher_webb1997hypercyclic}, see also \cite{banasiak_moszynski2005a,el-mourchid2006the,grosse-erdmann_peris-manguillot2011linear} for other reformulations of this result.
The application of the (DSW) criterion relies on verifying that the point spectrum of the infinitesimal generator of the $C_0$-semigroup contains ``enough'' eigenvalues. A criterion directly stated in these terms was firstly given for operators by Godefroy and Shapiro in \cite{godefroy_shapiro1991operators}. We will use the following version of the (DSW) Criterion, see \cite{grosse-erdmann_peris-manguillot2011linear}*{Th. 7.30}.

\begin{theorem}\label{dsw_gepm_eigenvaluecriterion}
	Let $X$ be a complex separable Banach space, and $\{T_t\}_{t\ge 0}$ a $C_0$-semigroup on $X$ with infinitesimal generator $(A,D(A))$. Assume that there exists an open connected subset $U$ and a weakly holomorphic function $f:U\rightarrow X$, such that
	\begin{enumerate} 
		\item[(1)] $U\cap i\mathbb{R}\ne \emptyset$,
		\item[(2)] $f(\lambda)\in\ker (\lambda I-A)$ for every $\lambda\in U$,
		\item[(3)] for any $x^*\in X^*$, if $\langle f(\lambda),x^*\rangle = 0$ for all $\lambda\in U$, then $x^*=0$.
	\end{enumerate}
	Then the semigroup $\{T_t\}_{t\ge 0}$ is Devaney chaotic (and topologically mixing).
\end{theorem}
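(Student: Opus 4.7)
The plan is to establish Devaney chaos (and, in fact, topological mixing) by producing three dense subspaces of $X$ built from the eigenvector field $f$. The starting observation is that condition (2) forces $T_t f(\lambda)=e^{t\lambda}f(\lambda)$ for every $\lambda\in U$ and every $t\ge 0$, so each $f(\lambda)$ is an eigenvector of every $T_t$ with eigenvalue $e^{t\lambda}$.

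Using condition (1) and the openness of $U$, I would split the parameter set as $U_-=\{\lambda\in U:\mathrm{Re}\,\lambda<0\}$ and $U_+=\{\lambda\in U:\mathrm{Re}\,\lambda>0\}$; both are nonempty because any point of the nonempty set $U\cap i\mathbb{R}$ has a neighborhood in $U$ meeting both half-planes. Set $X_\pm=\mathrm{span}\{f(\lambda):\lambda\in U_\pm\}$ and $X_{\mathrm{per}}=\mathrm{span}\{f(2\pi i q):q\in\mathbb{Q},\ 2\pi i q\in U\}$. Each of these three subspaces is dense in $X$ by the following Hahn--Banach argument: if $x^*\in X^*$ annihilates any one of them, the scalar weakly holomorphic function $g(\lambda)=\langle f(\lambda),x^*\rangle$ vanishes on a subset of the connected open set $U$ with an accumulation point inside $U$, so the one-variable identity principle gives $g\equiv 0$ on $U$, and condition (3) then forces $x^*=0$.

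Density of $X_{\mathrm{per}}$ immediately yields density of periodic points, since any finite sum $\sum_{j=1}^m c_j f(2\pi i q_j)$ is fixed by $T_t$ as soon as $q_j t\in\mathbb{Z}$ for every $j$, which happens for $t$ equal to any common multiple of the denominators of the $q_j$. For hypercyclicity and topological mixing I would apply the semigroup Godefroy--Shapiro / Desch--Schappacher--Webb criterion using $X_-$ and $X_+$: on $X_-$ one has $T_t x\to 0$ as $t\to\infty$ because $\mathrm{Re}\,\lambda<0$ on $U_-$, while on $X_+$ the prescription $S_t f(\lambda):=e^{-t\lambda}f(\lambda)$ defines a right inverse of $T_t$ that sends vectors to $0$ as $t\to\infty$ since $\mathrm{Re}\,\lambda>0$ on $U_+$.

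The main technical obstacle is the scalar identity principle in the second paragraph: one must verify that the sets of eigenparameters carved out of $U$ really do accumulate inside $U$ (and not merely on its boundary) so that the holomorphic $g$ is forced to vanish everywhere on $U$. Once this density is verified, the splitting into $X_-$, $X_+$, and $X_{\mathrm{per}}$ packages directly into Devaney chaos together with topological mixing.
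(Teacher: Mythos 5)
The paper does not prove this statement at all: it quotes it verbatim as Theorem 7.30 of Grosse-Erdmann and Peris, and your argument is precisely the standard proof of that result --- the three eigenvector spans over $U_-$, $U_+$ and the rational points of $U\cap i\mathbb{R}$, density via Hahn--Banach together with the identity principle on the connected open set $U$, and the Godefroy--Shapiro/hypercyclicity criterion applied along the full net $t\to\infty$ to get mixing --- so it is correct and matches the cited source's approach. The ``main technical obstacle'' you flag is in fact immediate: $U_\pm$ are nonempty open subsets of $U$ (hence accumulate in themselves), and the points $2\pi i q$ with $q\in\mathbb{Q}$ accumulate at every point of the nonempty relatively open set $U\cap i\mathbb{R}$, so in all three cases the zero set of $\lambda\mapsto\langle f(\lambda),x^*\rangle$ has an accumulation point inside $U$ and the identity principle forces it to vanish on all of $U$.
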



Since we can assume $0.3<\lambda_i<0.4$ for all $i\in\mathbb{N}$, c.f.  \cite{brackstone_mcdonald1999car-following}, we can apply this criterion to the (IQTD) model. The proof of the following result can be compared with the one of \cite{grosse-erdmann_peris-manguillot2011linear}*{Prop. 7.36}.

\begin{theorem}Let $0<\alpha<\beta$. 
	Let us also consider the (IQTD) model on $\ell_1(s)$, with $0<s<\alpha/\beta$. If  $\alpha\le\lambda_i\le\beta$ for all $i\in\mathbb{N}$, then the solution $C_0$-semigroup of the (IQTD) model is Devaney chaotic on $\ell_1(s)$.
\end{theorem}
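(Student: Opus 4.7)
The plan is to apply the Desch--Schappacher--Webb eigenvalue criterion (Theorem~\ref{dsw_gepm_eigenvaluecriterion}) to $A$, in the spirit of \cite{grosse-erdmann_peris-manguillot2011linear}*{Prop.~7.36}. Since $\sup_i\lambda_i\le\beta$, the operator $A$ is bounded on $\ell_1(s)$ and hence generates a uniformly continuous $C_0$-semigroup, so the criterion is applicable.

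First I would produce the eigenvector family. The equation $Au=\mu u$ reduces coordinate-wise to $u_{i+1}=(1+\mu/\lambda_i)u_i$, and normalizing $u_1=1$ gives $f(\mu)=(u_i(\mu))_{i\in\mathbb{N}}$ with $u_i(\mu)=\prod_{j=1}^{i-1}(1+\mu/\lambda_j)$, each $u_i$ a polynomial of degree $i-1$. The bound $|u_i(\mu)|\le(1+|\mu|/\alpha)^{i-1}$ shows that $\|f(\mu)\|_s<\infty$ precisely when $|\mu|<\alpha(1-s)/s$; the hypothesis $s<\alpha/\beta$ forces $\alpha(1-s)/s>\beta-\alpha>0$, so $U:=\{\mu\in\mathbb{C}:|\mu|<\alpha(1-s)/s\}$ is a nonempty open disk containing $0$. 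Thus $U$ meets $i\mathbb{R}$ (condition~(1)), $f(\mu)\in\ker(\mu I-A)$ by construction (condition~(2)), and weak (indeed norm-)holomorphy of $f\colon U\to\ell_1(s)$ follows from uniform convergence of the coordinate-wise polynomial series on compact subsets of $U$.

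The remaining condition~(3) is where the main difficulty lies. Identifying $(\ell_1(s))^*$ with the weighted sequences $(y_i)$ satisfying $\sup_i|y_i|/s^i<\infty$, it amounts to showing that $\phi(\mu):=\sum_i y_i u_i(\mu)\equiv 0$ on $U$ implies $y=0$. A direct triangular inversion of the vanishing Taylor coefficients of $\phi$ at $0$ seems, with the crudest estimates, to require the stricter bound $s<\alpha/(\alpha+\beta)$; to obtain the optimal range $s<\alpha/\beta$ I would reduce the problem to the death model via the isometric isomorphism $J\colon\ell_1(s)\to\ell_1(1)$ given by $(Ju)_i=s^iu_i$. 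A direct computation shows that $J$ intertwines $A$ with the death-model operator $(Bv)_i=(\lambda_i/s)v_{i+1}-\lambda_iv_i$, corresponding to parameters $\alpha_i=\lambda_i$ and $\beta_i=\lambda_i/s$; the hypothesis $s<\alpha/\beta$ translates exactly to $\sup_i\alpha_i\le\beta<\alpha/s\le\liminf_i\beta_i$, which is the hypothesis of the death-model theorem quoted earlier. That theorem yields Devaney chaos of $\{e^{tB}\}_{t\ge 0}$ on $\ell_1(1)$, and since $J$ is an isometric isomorphism with $e^{tA}=J^{-1}e^{tB}J$, the same property transfers verbatim to $\{e^{tA}\}_{t\ge 0}$ on $\ell_1(s)$, completing the proof (and incidentally validating condition~(3)).
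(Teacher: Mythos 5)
Your proposal is correct, and at the decisive step it takes a genuinely different route from the paper. The paper verifies the Desch--Schappacher--Webb criterion directly: it uses the same eigenvector field $h_\mu$ with $(h_\mu)_i=\prod_{j=1}^{i-1}(1+\mu/\lambda_j)$ on a disk $U=\{|\mu|<\varepsilon\}$ with $(\varepsilon+\beta)s/\alpha<1$, and establishes condition (3) by evaluating $\langle f(\mu),\gamma\rangle$ at $\mu=-\lambda_1,-\lambda_2,\dots$ and dividing out the factors $\mu+\lambda_j$ inductively. You instead bypass condition (3) altogether by conjugating $A$ with the diagonal isometry $J\colon\ell_1(s)\to\ell_1(1)$, $(Ju)_i=s^iu_i$, which carries $A$ to the death-model generator with $\alpha_i=\lambda_i$, $\beta_i=\lambda_i/s$, and then invoking the quoted death-model theorem; your verification that $\sup_i\alpha_i\le\beta<\alpha/s\le\liminf_i\beta_i$ is exactly the hypothesis $s<\alpha/\beta$ is correct, and Devaney chaos transfers under the isometric conjugacy $e^{tA}=J^{-1}e^{tB}J$. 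This is a complete argument, and once the conjugacy is in place the explicit eigenvectors and the disk $U$ in your first two paragraphs become redundant (the only slip there is the word ``precisely'': the bound $|u_i(\mu)|\le(1+|\mu|/\alpha)^{i-1}$ gives convergence for $|\mu|<\alpha(1-s)/s$ but not a converse). Your reduction also quietly sidesteps a delicate point in the paper's own treatment of (3): for $\mu=-\lambda_j$ to lie in $U$ one needs $\varepsilon>\lambda_j$, which together with $(\varepsilon+\beta)s/\alpha<1$ forces roughly $s<\alpha/(2\beta)$, stricter than the stated range --- precisely the kind of loss you anticipated when estimating that a crude direct attack gives only $s<\alpha/(\alpha+\beta)$. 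What the paper's direct approach buys is self-containedness and the explicit family $h_\mu$, which is reused later for the stability statements; what your approach buys is the full range of $s$ with essentially no estimates.
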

\begin{proof}
	Fix $s$ with $0<s<\alpha/\beta$ and consider the solution $C_0$-semigroup of the (IQTD) model whose infinitesimal generator is given by the operator $A$ defined in \eqref{equation_A}. 
	For applying Theorem \ref{dsw_gepm_eigenvaluecriterion}, we consider as $U$ the open disk centered at $0$ and radius $0<\varepsilon$ such that $(\varepsilon+\beta)s/\alpha<1$, i.e. $U=\{\mu\in\mathbb{C}\,:\,|\mu|<\varepsilon\}$, and then (1) of Theorem \ref{dsw_gepm_eigenvaluecriterion} holds. We define the function $f:U\rightarrow \ell_1(s)$ as $f(\mu)=h_\mu$,  with $h_\mu$ defined as  
	\begin{equation}
	h_\mu=\left(1,\frac{(\mu+\lambda_1)}{\lambda_1},\frac{(\mu+\lambda_1)}{{\lambda_1}}\frac{(\mu+\lambda_2)}{\lambda_2},\ldots,\prod_{j=1}^{i-1}\frac{\mu+\lambda_j}{\lambda_j},\ldots\right).
	\end{equation}
	
	Then $f(\mu)\in \ell_1(s)$ because, by the choice of $\mu$, we have
	\begin{align}\label {cuentecita} 
	||h_\mu||_s  &=\sum_{i=1}^\infty|(h_\mu)_i|s^i<s+\sum_{i=2}^\infty\prod_{j=1}^{i-1}\frac{|\mu+\lambda_j|}{|\lambda_j|}s^{i}
	<s+\sum_{i=2}^\infty\prod_{j=1}^{i-1}\frac{|\mu|+|\lambda_j|}{|\lambda_j|}s^{i}\\
	&<s+\sum_{i=2}^\infty s \left(\frac{(\varepsilon+\beta)s}{\alpha}\right)^{i-1}<\infty.
	\end{align}
	
	In fact, $f(\mu)$ is an eigenvector of $A$ associated to the eigenvalue $\mu$. This gives (2) of Theorem \ref{dsw_gepm_eigenvaluecriterion}.
	Finally, for proving (3) of Theorem \ref{dsw_gepm_eigenvaluecriterion}, let $\gamma\in \ell_\infty(1/s)$, the dual space of $\ell_1(s)$, such that $\langle f(\mu),\gamma\rangle=0$ for all $\lambda\in U$. The function $\mu\rightarrow  \langle f(\mu),\gamma\rangle$ defined as
	\begin{equation}\label{elemento_dual}
	\langle f(\mu),\gamma\rangle=\gamma_1+\sum_{i=2}^\infty \gamma_{i}\left(\prod_{j=1}^{i-1}\frac{\mu+\lambda_j}{\lambda_j}\right)
	\end{equation}
	converges uniformly for all $\mu\in U$ by \ref{cuentecita}, which gives that $f$ is weakly holomorphic on $U$.
	
	If we consider $\mu=-\lambda_1\in U$ we have that \eqref{elemento_dual} is reduce to $\gamma_1$, which gives $\gamma_1=0$.  Then, we divide the expression in \eqref{elemento_dual} by $\mu+\lambda_1$. The result is still a weakly holomorphic function on $U$. Now, if we consider $\mu=-\lambda_2\in U$, then we get $\gamma_2=0$. Repeating the process inductively, we get that all $\gamma_i=0$, $i\in\mathbb{N}$, and the proof is finished.
	
	%
	%
	
\end{proof}

Clearly, we get Devaney chaos for the aforementioned experimental values obtained for the (IQYD) model.

\begin{corollary}
	Let us consider the (IQTD) model on $\ell_1(s)$, with $0<s<3/4$. If $0.3\le\lambda_i\le0.4$ for all $i\in\mathbb{N}$, then the solution $C_0$-semigroup of the (IQTD) model is Devaney chaotic on $\ell_1(s)$.
\end{corollary}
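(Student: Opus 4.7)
The plan is to invoke the preceding theorem as a black box, specialized to the experimentally motivated parameter range of McCartney--Gibson. Concretely, I would set $\alpha = 0.3$ and $\beta = 0.4$, so that the hypothesis $0.3 \le \lambda_i \le 0.4$ of the corollary becomes exactly the hypothesis $\alpha \le \lambda_i \le \beta$ of the theorem.

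Next I would check that the weight condition matches up. With $\alpha = 0.3$ and $\beta = 0.4$, the ratio is $\alpha/\beta = 3/4$, so the theorem's constraint $0 < s < \alpha/\beta$ becomes precisely the hypothesis $0 < s < 3/4$ of the corollary. Both hypotheses of the theorem are therefore verified verbatim from the hypotheses of the corollary, with nothing further to check.

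Applying the preceding theorem then yields immediately that the solution $C_0$-semigroup of the (IQTD) model is Devaney chaotic on $\ell_1(s)$, which is the conclusion of the corollary. There is essentially no obstacle in this proof: the substantive work (construction of the eigenvector family $h_\mu$, the summability estimate, and the successive-division argument for weak holomorphicity) was done in the previous theorem via the (DSW) criterion. The role of the corollary is purely expository, recording that Devaney chaos persists for the empirical sensitivity range reported in \cite{brackstone_mcdonald1999car-following}, on a concrete and natural scale of weighted $\ell_1$ spaces.
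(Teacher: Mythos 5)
Your proof is correct and matches the paper's (implicit) argument exactly: the corollary is obtained by specializing the preceding theorem to $\alpha=0.3$, $\beta=0.4$, so that $\alpha/\beta=3/4$ and both hypotheses transfer verbatim. The paper itself only remarks that this follows ``clearly,'' and your specialization is precisely what is meant.
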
 

As a result, if an infinite numbers of cars drive along a road and all the drivers respond in the same way to any change in the velocity of the car in front, this may lead to a chaotic situation, depending on the initial velocities of the cars. 
Therefore other considerations should be taken into account in order to manage the speed adjustment.
This also contrasts with the behaviour of the $C_0$-semigroup  solution on $\ell_1$  studied in \cite{delaubenfels_emamirad_protopopescu2000linear} where it is proved that this model is not Devaney chaotic on $\ell_1(1)$.

Up to here, we have studied Devaney chaos for the (IQTD) model. 
It is also well known that distributional chaos holds whenever the DSW criterion can be applied \cite{barrachina_conejero2012devaney,bermudez_bonilla_martinez-gimenez_peris2011li-yorke}. As a result, we obtain the following corollary.
\begin{corollary}
	Let us consider the (IQTD) model on $\ell_1(s)$, with $0<s<3/4$. If $0.3<\lambda_i<0.4$ for all $i\in\mathbb{N}$, then the solution $C_0$-semigroup of the (IQTD) model is distributionally chaotic on $\ell_1(s)$.
\end{corollary}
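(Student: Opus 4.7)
The plan is essentially to reuse the DSW construction from the preceding theorem and then quote a black-box transfer result. First, I would observe that under the stated hypotheses $0.3 < \lambda_i < 0.4$ and $0 < s < 3/4$, the preceding theorem applies with $\alpha = 0.3$ and $\beta = 0.4$, so that $\alpha/\beta = 3/4$. Consequently, its proof has already produced an open disk $U$ centered at $0$ meeting the imaginary axis, together with a weakly holomorphic eigenvector field $f : U \to \ell_1(s)$, $f(\mu) = h_\mu$, verifying all three hypotheses of Theorem \ref{dsw_gepm_eigenvaluecriterion} for the infinitesimal generator $A$ of the solution $C_0$-semigroup of the (IQTD) model on $\ell_1(s)$.

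With the DSW eigenvalue criterion thus satisfied, I would next invoke the general implication recalled in the paragraph immediately preceding the statement: by \cite{barrachina_conejero2012devaney,bermudez_bonilla_martinez-gimenez_peris2011li-yorke}, whenever the DSW criterion applies to a $C_0$-semigroup on a separable Banach space, that semigroup is automatically distributionally chaotic. Applying this abstract result to the semigroup under consideration on $\ell_1(s)$ yields the claimed distributional chaos and closes the argument.

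The corollary presents no genuine obstacle: the analytic core, namely the construction of $h_\mu$, the uniform convergence estimate \eqref{cuentecita}, and the inductive argument ruling out nontrivial functionals vanishing on $f(U)$, has already been carried out in the proof of the previous theorem, and the passage from DSW to distributional chaos is a quoted abstract result that I would not reprove. The only bookkeeping point worth double-checking is that the strict experimental range $0.3 < \lambda_i < 0.4$ is contained in the non-strict range $\alpha \le \lambda_i \le \beta$ of the previous theorem for the chosen $\alpha,\beta$, which is immediate.
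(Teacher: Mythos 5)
Your proposal matches the paper's argument exactly: the paper derives this corollary by noting that the previous theorem (with $\alpha=0.3$, $\beta=0.4$, so $\alpha/\beta=3/4$) verifies the DSW eigenvalue criterion, and then invoking the cited result that distributional chaos follows whenever the DSW criterion applies. Your proposal is correct and takes essentially the same route.
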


Roughly speaking, this means that we can pick two vectors of initial velocities from an uncountable set such that there will be intervals of arbitrary time length in which the velocities of the cars are very similar and  intervals in which there exists at least some positive difference between their velocities.

We conclude this section with a comment regarding the stability of the solutions.
An analysis of  local and asymptotic stability of the (QTD) model was considered in \cite{herman_montroll_elliott_potts_rothery1959traffic}. From the point of view of $C_0$-semigroups, 
we recall that a $C_0$-semigroup of the form
$\{e^{tA}\}_{t\ge 0}$ defined on a Banach space $X$ is \emph{(uniformly) exponentially stable}, \cite{engel_nagel2000one-parameter},
if there exists $\eta>0$ such that
\begin{equation}
\lim_{t\rightarrow \infty} e^{\eta t}||e^{tA}||=0.
\end{equation}

This situation is not fulfilled in our case. Nevertheless, a weaker version of this condition can also be considered.
We say that $\{e^{tA}\}_{t\ge 0}$ is \emph{exponentially stable on a subspace} $Y\subset X$
if there exists $\eta>0$ such that for any $y\in Y$ we have
\begin{equation}
\lim_{t\rightarrow \infty} e^{\eta t}||e^{tA}y||=0,
\end{equation}

This analysis is sometimes considered when studying chaos of $C_0$-semigroups,
as it is done in \cite{banasiak_moszynski2011dynamics,brzezniak_dawidowicz2009on,conejero_rodenas_trujillo2015chaos}.

\begin{theorem}\label{stability_IQTD}
	Let $0<\alpha<\beta$. Let us consider the (IQTD) model on $\ell_1(s)$, with $0<s<\alpha/\beta$. If $\alpha\le\lambda_i\le\beta$ for all $i\in\mathbb{N}$, then the solution $C_0$-semigroup of the $(IQTD)$ model is exponentially stable on the subspaces
	\begin{equation}
	Y_\delta:=\spa\{h_\mu \,:\, |\mu|<\varepsilon,\, \Re(\mu)<\delta\},
	\end{equation}
	for every $-\varepsilon<\delta<0$, where $\varepsilon>0$ satisfies $s(\varepsilon+\beta)/\alpha<1$.
\end{theorem}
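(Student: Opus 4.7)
The plan is to exploit the eigenvector structure already developed in the proof of the previous theorem. For each $\mu\in U=\{\mu\in\mathbb{C}:|\mu|<\varepsilon\}$, the vector $h_\mu$ lies in $\ell_1(s)$ (by the estimate \eqref{cuentecita}) and satisfies $Ah_\mu=\mu h_\mu$. Since $A$ is bounded and generates the semigroup $\{T_t\}_{t\ge 0}$, each $h_\mu$ is also an eigenvector of every $T_t$, with $T_t h_\mu=e^{t\mu}h_\mu$. This is the first fact I would record, obtained by a direct term-by-term verification (each factor in the definition of $h_\mu$ is designed so that $\lambda_i((h_\mu)_{i+1}-(h_\mu)_i)=\mu(h_\mu)_i$).

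Next, I would use that $Y_\delta$ is the \emph{algebraic} span, so an arbitrary $y\in Y_\delta$ is a finite linear combination $y=\sum_{k=1}^n c_k h_{\mu_k}$ with each $|\mu_k|<\varepsilon$ and $\Re(\mu_k)<\delta$. By linearity of $T_t$ and the triangle inequality in $\ell_1(s)$,
$$\|T_t y\|_s \;\le\; \sum_{k=1}^n |c_k|\,e^{t\Re(\mu_k)}\,\|h_{\mu_k}\|_s \;\le\; C_y\,e^{t\delta},$$
where $C_y:=\sum_{k=1}^n |c_k|\,\|h_{\mu_k}\|_s<\infty$. Each $\|h_{\mu_k}\|_s$ is finite by \eqref{cuentecita} because $\mu_k\in U$.

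To finish, I would pick any $\eta\in(0,-\delta)$, which is possible since $\delta<0$, and observe that
$$e^{\eta t}\|T_t y\|_s \;\le\; C_y\,e^{(\eta+\delta)t}\;\longrightarrow\;0 \quad\text{as } t\to\infty,$$
because $\eta+\delta<0$. This $\eta$ does not depend on $y$, so it yields exponential stability on $Y_\delta$ with decay rate $\eta$. There is essentially no obstacle in the argument: the whole point is that $T_t$ decouples into independent scalar exponentials on the eigenvector family $\{h_\mu\}$, and the strict inequality $\Re(\mu_k)<\delta$ provides a uniform exponential upper bound. The only incidental remark worth making is that $Y_\delta$ is nontrivial for every $-\varepsilon<\delta<0$, since one can always take a real $\mu\in(-\varepsilon,\delta)$ to produce a generator $h_\mu$.
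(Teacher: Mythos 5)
Your proof is correct and follows essentially the same route as the paper's: decompose $y\in Y_\delta$ as a finite combination of the eigenvectors $h_{\mu_k}$, use $T_t h_{\mu_k}=e^{t\mu_k}h_{\mu_k}$ and the triangle inequality, and choose $0<\eta<-\delta$ so that $e^{\eta t}\|T_t y\|_s$ decays exponentially. The only cosmetic difference is that the paper bounds $\Re(\mu_k)$ by $\delta_y:=\max_k \Re(\mu_k)$ rather than directly by $\delta$, which changes nothing.
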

\begin{proof}
	Fix $0<s<\alpha/\beta$, $\varepsilon>0$ satisfying $s(\varepsilon+\beta)/\alpha<1$
	and $-\varepsilon<\delta<0$. Take $0<\eta<-\delta$ and $y\in Y_\delta$ of the form $y=\sum_{i=1}^k\alpha_i h_{\mu_i}$. Define $\delta_y=\max\{\Re(\mu_i)\,:\,1\le i\le k\}$. Clearly, $\eta+\delta_y<0$, then
	\begin{align*}
	e^{\eta t}||e^{tA}y||_s &=e^{\eta t}\left|\left|\sum_{i=1}^k\alpha_ie^{t\mu_i}h_{\mu_i}\right|\right|_s\le e^{\eta t} \left(\sum_{i=1}^ke^{t\Re(\mu_i)}||\alpha_ih_{\mu_i}||_s\right)\\
	& < e^{t(\eta+\delta_y)}\left(\sum_{i=1}^k||\alpha_i h_{\mu_i}||_s\right)
	\end{align*}
	which tends to $0$ when $t$ tends to $\infty$.
\end{proof}

\begin{remark}
	For the experimental values of $0.3\le\lambda_i\le 0.4$ for all $i\in\mathbb{N}$, Theorem \ref{stability_IQTD} states that there are subspaces where the solution $C_0$-semigroup of the $(IQTD)$ model is exponentially stable on $\ell_1(s)$, for $0<s<3/4$. Moreover, in these spaces there are also subspaces where the solution $C_0$-semigroup of the $(IQTD)$ model is not exponentially stable.
	If $\varepsilon>0$ satisfies $s(\varepsilon+\beta)/\alpha<1$, we can take, for instance, the case where initial conditions are taken on 
	$Z:=\spa\{h_\mu \,:\, |\mu|<\varepsilon,\, \Re(\mu)>0\}$,  since we have that $\lim_{t\rightarrow \infty} ||T_tz||_s=\infty$ for all $z\in Z\setminus\{0\}$.
\end{remark}

\section{Final comments}
In this note we have related the dynamics of models of traffic following with the dynamics of death process of cell growth. We also point out that some other models like the one of forward and backward control \cite{herman_montroll_elliott_potts_rothery1959traffic} can also be related with birth-and-death process of 
cell--proliferation.

It is fair to note that the chaotic properties reported in this paper refer to the whole-space of solutions whereas  only positive solutions make sense. Therefore,  being important, e.g., in the analysis of the stability of numerical schemes, does not necessarily say anything important about the actual behaviour of the ``physical'' trajectories of the cars, in the same way as it is commented in \cite{banasiak_lachowicz_moszynski2003topological}. To sum up, in dense traffic drivers follow one another very closely and small disturbances such as acceleration or deceleration of one vehicle might be passed over or amplified along the line of vehicles on the road.
These disturbances can be considered in \eqref{quickinfinite} by adding to the right part of the equations a term related to the notion of acceleration noise (a kind of measure for the smoothness or the quality of traffic flow, see \cite{accnoise}).
A chaotic flow appears and this can lead to an impredictible dynamical behaviour and, in some cases, to accidents.

\bibliographystyle{spmpsci}      

\end{document}